\title{Generating Minimal Boundary Maps}
\author{Marian Anton, Landon Renzullo}
\address{IMAR, P.O. Box 1-764, 014700 Bucharest, Romania, and CCSU, Dept. of Math Sciences, 1615 Stanley St, New Britain, CT 06050, USA.}
\begin{document}
\maketitle


\newtheorem{thm}{\noindent\bf Theorem:}[section]
\newtheorem{lem}{\noindent\bf Lemma:}[section]
\newtheorem{cor}{\noindent\bf Corrallary:}[section]
\newenvironment{myproof}{\noindent\textbf{Proof:}}{QED\newline}
\newcommand{\N}{\mathbb{N}}
\newcommand{\bigO}{\mathcal{O}}
\newcommand{\pyspace}{\hspace{12pt}}

\section{Abstract}
In this paper we examine the descriptive potential of a combinatorial data structure known as \textbf{Generating Set} in constructing the boundary maps of a simplicial complex. By refining the approach of \cite{Dumas} in generating these maps, we provide algorithms that allow for relations among simplices to be easily accounted for. In this way we explicitly generate faces of a complex only once, even if a face is shared among multiple simplices. The result is a useful interface for constructing complexes with many relations and for extending our algorithms to $\Delta$-complexes. Once we efficiently retrieve the representatives of "living" simplices i.e., of those that have not been related away, the construction of the boundary maps scales well with the number of relations and provides a simpler alternative to JPlex. We finish by noting that the generating data of a complex is equivalent in information to its incidence matrix and we provide efficient algorithms for converting from an incidence matrix to a Generating Set.\newline

\noindent Keywords: topological data analysis; boundary maps; incidence matrix; simplicial complex
\section{Introduction}
We use \cite{hatcher} as a background reference. Let $X = \bigcup_{k = 0}^N X_k$ be a simplicial complex where $X_k$ is the set of $k$-dimensional simplices and $N$ is the maximum dimension of a simplex in $X$. If we denote by $C_k$ the free Abelian group generated by the set $X_k$, the boundary map $\delta_k:C_k\rightarrow C_{k-1}$ is the linear map defined on a $k$-simplex $\alpha=\{v_0,v_1,\dots v_k\}$ by the linear combination
$$\delta_k(\alpha) = \sum_{i = 0}^k (-1)^{i}\alpha_i$$
where $1\le k\le N$ and $\alpha_i=\alpha - \{v_i\}$. For $k>N$, we define $\delta_k$ to be the zero map. We encode this mapping into a matrix $M_k$ by letting each column represent a $k$-simplex $\alpha$ in $X_k$ and each row a $(k-1)$-simplex $\alpha_i$ in $X_{k-1}$ such that the column entry for $(\alpha_i,\alpha)$ is the coefficient $(-1)^i$ of the linear combination $\delta_k(\alpha)$ above and zero otherwise. It is easy to show that $\delta_{k-1}(\delta_{k}(\alpha)) = 0$ for all $k$-simplices $\alpha$, i.e. the element $\delta_k(\alpha)\in C_{k-1}$ is a $(k-1)$-cycle. \newline

Constructing a matrix $M_k$ for each dimension $k$ allows one to compute the homology groups of the chain complex $C=\{C_k\}_{k=0}^N$. The $k$-th homology group is defined as the following quotient: $$H_k(C)=\ker(M_k)/{\rm im}(M_{k+1})$$
where $\ker(M_k)$ is the kernel of $\delta_k$, i.e. all $k$-chains $\alpha$ such that $\delta_k(\alpha) = 0$, and ${\rm im}(M_{k+1})$ is the image of all $(k+1)$-chains under $\delta_{k+1}$.  This quotient is valid because, as we remarked before, the boundary of a boundary is always zero, hence the boundary of any $(k+1)$-simplex must itself have a trivial boundary under $\delta_k$, and therefore ${\rm im}(\delta_{k+1})\subseteq \ker(\delta_k)$. \newline

Once the boundary matrix $M_k$ is constructed, computing homology groups can be then done by the Gaussian row reduction. In particular, by taking the codimension of the image ${\rm im}(\delta_{k+1})$ inside the kernel to retrieve the $k$-th Betti numbers of the complex, i.e. the number of $k$-dimensional "holes" in $C$. As Carlsson points out in \cite{Carlsson} in his seminal paper on topological data analysis (TDA), the Gaussian method is in general too inefficient to be used on large scale complexes. However, if one only desires the Betti numbers of $C$ then alternatively one can use the Smith normal form of our boundary matrices which allows for the computation of all Betti numbers at once and is much more efficient. Algorithms for computing Betti numbers have been studied in depth by others, and in fact much that can be done in the field of topological data analysis relies on efficient computation of Betti numbers over filtered complexes  as in \cite{Dumas, Jager}. Once we have the Betti numbers, another tool known as "barcodes" can be developed as a nice way of representing topological features at multiple time steps or distance thresholds; these are often referred to as "persistent" in our data, as they remain invariant over relatively large intervals of the parameter. See \cite{Ghrist}. \newline

Here however we do not discuss such algorithms in detail, but instead we focus on the generation of the boundary matrix itself. In \cite{Dumas}, some of the difficulties in generating a boundary matrix efficiently are discussed along with a process for trying to generate the boundary map while avoiding the repeat work of the naive algorithm that stems from many simplices sharing the same sub-complex. We first provide a recursive algorithm that can be used to generate any set of independent complexes without repeats. We then expand upon this algorithm to generate any complex with work proportional to that of generating complexes of independent simplices. To do this as generally as possible we introduce the concept of generating sets, which are  a collection of data that describes our complex in a way that is completely combinatorial. One benefit of such a description is the relative ease of describing a type of cellular complex known as the $\Delta$-complex. This is made possible by the construction of a relational map between simplices, which can be deduced from our combinatorial descriptors. We provide algorithms for constructing the relational map, and then in turn the boundary matrix by a method that does not require the explicit generation of any maximal simplex more than once. This gives an algorithmic complexity that improves with the number of relations we have, and also allows us to use a convenient high level description of our complex. We conclude by discussing algorithms for constructing a generating set from the incidence matrix of a complex.

\section{Constructing Boundaries from Generating Sets}
\subsection{Generating Set:}
We define the \textbf{Generating Set} of a complex $C$ to be a pair of sets $G = (S,R)$. The set $$S = \{(n_1,d_1),(n_2,d_2),\dots,(n_k,d_k)\}$$ with $d_i<d_j$ for $i<j$ is a set of tuples where for each tuple $(n_i,d_i)$ there are $n_i$ maximal simplices of dimension $d_i$ in $C$. By \textbf{maximal simplices} we mean simplices that are not the face of another simplex in $C$.  The set $R= \{r_1,r_2,\dots r_m\}$ is a set of relations among faces of maximal simplices where $r_i = (\alpha_1,\dots \alpha_p)$ with all $\alpha_j\in r_i$ of the same dimension. Observe that the number of $k$-simplices generated by $S$ can be calculated by the following function:
$$ j(S,k) = \sum_{(n_i,d_i)\in S}n_i\binom{d_i}{k}.$$

We may also want to calculate $j(S_m,k)$ by the same formula but taken over the set $S_m$ of all pairs $(n_i,d_i)$ with $d_i \leq m$ for a fixed $m$. 

Generating Set can provide a nice interface for constructing certain complexes known as $\Delta$-complexes. Take the complex of the torus as an example. One way to construct it inductively (as JPlex does) is by first adding all necessary vertices, then drawing edges between certain vertices, and finally filling up faces bounded by three edges. This method, while efficient computationally, is tedious, and also does not allow for much simplification of the complex. One valid generating set for the torus would be the following:
$$ S = \{(2,2)\}, R = \{\{(0,1),(4,5)\},\{(0,2),(3,5)\},\{(1,2),(3,4)\}\}$$

As long as we have identified all the faces in $R$ (all sub-simplices are also identified between related simplices), that is sufficient to construct the boundary maps of the $\Delta$-complex of the torus. \newline

To generate the maximal simplices to which we later glue faces to make our complex, we provide the following algorithm:\newline
\begin{algorithm}
	$P(S)$:\newline
	$N = j(S,0)$\newline
	$S^* = \emptyset$\newline
	$c = 0$\newline
	\For{$s_i = (n_i,d_i)\in S$}{
			\For{$1\to n_i$}{
				add \{$c,\dots c+d_i\}$ to $S^*$\newline
				$c = c + d_i + 1$\newline	
						}
			}
	return $S^*$
\end{algorithm}

The algorithm $P(S)$ simply partitions out the vertices into tuples that form all the maximal simplices. 

\subsection{From generating sets to boundary maps}
Let  an ordered set of matrices be given by $D = \{D_0,D_1,\dots,D_N\}$ where $D_i$ is the boundary map from the $i$-chains to the $(i-1)$-chains in the complex $C$. We would like to construct $D$ by using only our Generating Set in a way that is not wasteful. Henceforth, let $G = (S,R)$ be the Generating Set for a complex $C$. Our goal is to generate a list of matrices $D_k$ representing the maps $\delta_k$. Doing this with no relations is straightforward and once we have this, in theory we can add in our relations by simply multiplying each boundary matrix $D_k$ by a permutation matrix $M$ and removing rows and columns of non-representative simplices. Henceforth we may refer to this method as the \textbf{naive} approach to dealing with relations. Also, we may sometimes refer to the \textbf{naive} method of generating faces of a simplex by which we mean computing all boundaries for all faces. Observe, the initial cost of generating the boundaries of all simplices is wasteful given that we know that some (potentially many) of them will disappear after identifications. Besides generating dead simplices, generating the list of permutation matrices is also computationally expensive when done inductively. What we describe is instead a method of constructing boundary maps only for the simplices that are still "alive", while generating relational maps only for simplices that are "dead". We do this by exploiting the patterns predictable from our generation data. 
\subsection{Constructing a relational map}
The first step in our process is to pick a representative for each equivalence class constructed by our relation set, and to keep that representative valid through successive relations. We define the function $Lor:C\rightarrow C/R$ to be the map $\alpha\mapsto \hat\alpha$ where $\hat\alpha$ is the lexicographically lowest simplex to which $\alpha$ is related, i.e. $\hat\alpha=\min\{\beta \;|\; \alpha \equiv \beta\}$. This presents us with one method of constructing the equivalence classes with a valid representative, by simply iterating over all simplices, and placing them in their respective equivalence classes, finding the lowest, and constructing such a mapping. However, this is potentially wasteful in the sense that independent of the number of relations, we will have a runtime of $2^{\bigO(N)}$. In theory this is not necessary as we have our relational data up front and should be able to construct a map only modifying those simplices that appear in our relation. Below we describe an inductive approach to doing so. To make use of our previous notation, we will also consider the maps $Lor_r$ which will assigned the lowest related simplex of $\alpha$ given that the last relation accounted for was $r$. \newline

\begin{thm}
	There exists an algorithm such that given a map $Lor_r$, we can then construct in time $n2^{\bigO(k)}$ the map equivalent to $Lor_s$ for any other remaining relation $s$ where $k$ is the dimension of simplices in $s$ and $n$ is the number of simplices in $s$.
\end{thm}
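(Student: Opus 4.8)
The plan is to treat the passage from $Lor_r$ to $Lor_s$ as a localized update that touches only the faces of the simplices named in $s$ together with their current representatives. Write $s=(\alpha_1,\dots,\alpha_n)$ with each $\alpha_j$ of dimension $k$. The identification imposed by $s$ matches the vertices of the $\alpha_j$ in order, and as noted earlier all sub-simplices are identified along with the maximal ones, so $s$ induces a matching of sub-faces indexed by the nonempty subsets $T\subseteq\{0,\dots,k\}$: for each such $T$ we obtain one face $f_j^T$ of each $\alpha_j$, and these $n$ faces are exactly the simplices that $s$ forces to be identified at position $T$. Since a $k$-simplex has $2^{k+1}-1$ nonempty faces, there are $2^{\bigO(k)}$ such positions.

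First I would enumerate the face-positions $T$ and, for each, read off the $n$ faces $f_1^T,\dots,f_n^T$ and their current representatives $\rho_j=Lor_r(f_j^T)$. I would then compute the lexicographic minimum $\hat\rho_T=\min_j\rho_j$ and record the instruction that $\rho_1,\dots,\rho_n$ all collapse onto $\hat\rho_T$. The map $Lor_s$ is defined by $Lor_s(\beta)=\hat\rho_T$ whenever $Lor_r(\beta)$ equals one of the $\rho_j$ for some recorded position $T$, and $Lor_s(\beta)=Lor_r(\beta)$ otherwise; in practice this is implemented by re-pointing only the affected representatives, a union-find style merge, rather than every member of a class.

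The correctness argument rests on an invariant carried along the induction: the fibers of $Lor_r$ are precisely the equivalence classes generated by the relations processed so far, and $Lor_r$ sends each simplex to the lexicographically least element of its class. Adding $s$ merges, at each position $T$, exactly the classes of $f_1^T,\dots,f_n^T$, so the only fibers that can change are those meeting a face of some $\alpha_j$ — which is why restricting attention to these faces loses nothing. Because each $\rho_j$ is already the least element of its class by the invariant, the least element of the merged class is $\min_j\rho_j=\hat\rho_T$, so $Lor_s$ again assigns true lexicographic minima and the invariant is restored. The runtime is then immediate: $2^{\bigO(k)}$ positions, each costing $\bigO(n)$ lookups and one minimum, for a total of $n2^{\bigO(k)}$.

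The main obstacle I anticipate is precisely the representation issue hidden in the last clause of the construction. If $Lor_s$ had to be stored as a flat table agreeing on every simplex, then merging a class could force re-pointing arbitrarily many members that never appear in $s$, destroying the $n2^{\bigO(k)}$ bound; the theorem is attainable only if \emph{the map equivalent to} $Lor_s$ is allowed to be stored as the composition of $Lor_r$ with a small update on representatives (or as parent pointers resolved at lookup). I would therefore make this representation explicit and check that it still returns the correct representative for every input, so that both the stated complexity and the claimed equivalence are honest.
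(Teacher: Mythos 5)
Your proposal is correct, and its overall strategy coincides with the paper's: both treat the update as a lazy, union-find style merge that touches only the $2^{\bigO(k)}$ face relations induced by $s$ (your positions $T$ are exactly the paper's "subrelations among $\alpha_i\in s$ induced by $s$"), each costing $\bigO(n)$, and both avoid re-pointing whole equivalence classes by storing the new map as the old map plus a small update on representatives resolved at lookup — the paper's $RLor(\alpha)=Lor_r(Lor_r(\alpha))$ is precisely your "parent pointers resolved at lookup." Where you genuinely differ is the merge rule, and your version is the sounder one. The paper cases on whether the raw minimum $\beta$ of $s$ already lies in the table $H$: if so, every member of $s$ is sent to $RLor(\beta)$; if not, each representative $RLor(\alpha)$ for $\alpha\in s\cap H$ is compared with $\beta$ pairwise. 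This can go wrong: if $s=\{5,7\}$ with existing representatives $RLor(5)=3$ and $RLor(7)=1$, the paper's rule sends everything to $3$ and never merges with the class of $1$; similarly its pairwise comparisons need not merge two pre-existing classes with each other transitively. Your rule — compute $\hat\rho_T=\min_j Lor_r(f_j^T)$ over the \emph{current representatives} and re-point all of them there — handles these cases correctly, and the explicit invariant you maintain (representatives are always true lexicographic minima of the classes generated by the relations processed so far) is exactly what the paper's argument needs but only gestures at through the recursive definition of $RLor$. One caution that applies to both arguments: single-level indirection ($RLor$, or one round of re-pointing) only bounds chains of length two within one update; across many successive relations, maintaining correctness requires iterating the lookup to a fixed point (a genuine union-find \emph{find}), which you acknowledge and the paper does not.
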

\begin{proof}	
	Let $H$ be a dictionary or lookup table which stores the mapping of $Lor_r$. Define $RLor(\alpha)$ to be equal to $Lor_r(Lor_r(\alpha))$. Let $s = \{\alpha_1,\dots,\alpha_n\}$ be an unaccounted for relation in $H$. If all $\alpha_i$ in $s$ are not accounted for in $H$ then we simply update $H$ with all $\alpha_i$ mapping to the minimum simplex in $s$. Now suppose that $s$ and $H$ are not disjoint. Here the process is similar in the sense that if the minimum of $s$ is also in $s\cap H$, call it $\beta$, then we simply add all the elements of $s$ to $H$ by mapping $\alpha_i\in s$ to $RLor(\beta)$. Suppose that the minimum of $s$ is not in $s\cap H$. Then for all simplices $\alpha$ in $s\cap H$ we map $RLor(\alpha) = a$ to $\beta$ if $\beta < a$ and vice versa otherwise. Given the recursive definition of $RLor$ we know that anything which had $a$ as its lowest order relation in $H$ will now have the correct relation as well. $RLor$ in tandem with $H$ gives us $Lor_s$. We then must also add all subrelations among $\alpha_i\in s$ induced by $s$. We do this by decomposing all $\alpha_i$ and matching their respective faces of a given dimension giving us new relations. We update all such relations in the same process, without decomposing further. This gives us a final runtime of $n2^{\bigO(k)}$.
\end{proof}

Observe that this is in fact an improvement from the naive approach as in the worst case scenario every simplex in $C$ is in a relation which then requires iteration over all simplices and therefore does the same amount of work as the naive algorithm previously discussed. We now present an algorithm for constructing our list of boundary maps $D$ for the complex $C$ using our generating data. First, we construct a partition of a set of natural numbers from our generating data which will represent our linearly independent simplices without relations. This can be done by computing $P(S)$ from above to give us $\bar{S}$. In the following we also assume access to a function $pos:C\rightarrow\N$ where $pos(x)$ is equal to the relative order of $x$ among simplices of the same dimension in $C$. The arguments of our algorithm are a simplex $\alpha\in \bar{S}$, an index $p$ used in recursion, and the matrix list $D$ which we are updating for the whole complex of $\alpha$.   \newline

\begin{algorithm}
	BCON($\alpha,p$,$D$):\newline
	if $\alpha \neq Lor(\alpha)$: return\newline
	$M = D_{\text{len}(\alpha) - 1}$\newline
	\For{$0\leq i<\text{len}(s)$}{
		$\alpha^* = \alpha - \alpha[i]$\newline
		$a = pos(\alpha),b = pos(Lor(\alpha^*))$\newline
		if $i\neq\text{len}(\alpha)\mod{2}$:
		$\{M[b,a] = M[b,a] + 1\}$\newline
		else:
		$\{M[b,a] = M[b,a] - 1\}$\newline
		if $i > p$: BCON($Lor(\alpha^*)$,$i$,$D$)
							}
\end{algorithm}

\begin{thm}
	Given a map $Lor$ there is an algorithm to compute the minimal boundary matrix for $C$ in time $\bigO(|T|)$ where $T$ is the set of all representatives in $C$.
\end{thm}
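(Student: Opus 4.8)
The plan is to establish two things about the algorithm \texttt{BCON}: that it writes exactly the entries of the minimal boundary matrices $D_k$ (correctness), and that it touches each representative simplex only a bounded number of times, giving the $\bigO(|T|)$ bound (efficiency). I would organize both around a structural induction on codimension, starting from the maximal simplices produced by $P(S)$ and descending one dimension per recursive call. The only genuine content lies in coverage and non-repetition of the recursion; the per-call work is just the boundary formula, which is routine.

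First, correctness of the entries. When \texttt{BCON} is called on a representative $\alpha$ with $\alpha = Lor(\alpha)$, the loop computes each face $\alpha^* = \alpha - \alpha[i]$ and writes $M[\,pos(Lor(\alpha^*)),\,pos(\alpha)\,] = \pm 1$, with the sign fixed by the parity of $i$. This is precisely the column of $D_k$ prescribed by $\delta_k(\alpha)=\sum_i(-1)^i\alpha_i$, except that each face is first sent to its representative through $Lor$; identified faces therefore share a single row, which is exactly what makes the resulting matrix \emph{minimal} rather than the full boundary matrix. The guard that returns immediately when $\alpha \neq Lor(\alpha)$ discards the dead simplices, so no spurious columns appear. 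Consequently, once I know that every representative occurs exactly once as the argument $\alpha$, it follows that every column is written once and correctly, and that nothing else is written.

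The crux is thus to prove that the guarded recursion visits each representative exactly once. The key invariant concerns the index $p$: passing $p=i$ into the recursive call restricts the child's deletions to those with $i > p$, so that along any path from a maximal simplex down to a given face the deleted vertices form a strictly increasing sequence. Each face then corresponds to a unique such increasing deletion sequence, hence to a unique root-to-node path, which is the same non-repeat principle used by the independent-generation algorithm above. Combined with the early return on non-representatives and the lexicographic minimality of $Lor$, this should show that each living representative is entered into the recursion exactly once and each dead face is pruned. Summing the bounded per-call cost over all representatives then yields total work $\sum_{\alpha\in T}\bigO(1)=\bigO(|T|)$.

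The step I expect to be the main obstacle is making this exactly-once count rigorous in the presence of two complications. The first is reindexing: the loop index $i$ is a position within the \emph{current} (already reduced) face, whereas $p$ was produced at the parent level, so I must pin down the precise correspondence between these positions---most cleanly by exploiting that $P(S)$ lays out the vertices of each maximal simplex as a contiguous block, or by restating the guard in terms of absolute vertex labels---to be certain that the strictly-increasing-deletion argument neither skips nor double-counts a face. The second is identification: because $Lor$ can make a single representative reachable as a face of several distinct maximal simplices, I must verify that the guard together with $Lor$-minimality still prevents the same representative from being processed twice across different roots. Reconciling the deletion-order invariant with these two effects is where the real work lies; everything else reduces to checking the written entries against the boundary formula.
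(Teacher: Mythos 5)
Your decomposition (correct entries plus an exactly-once visitation lemma) is the right skeleton, and your correctness half is essentially the paper's own argument: the guard discards non-representatives, and applying $Lor$ to every face collapses identified rows, which is what makes the matrix minimal. The gap is that your crux --- the exactly-once lemma --- is left as a plan, and it cannot be completed for BCON as written, because both of the ``obstacles'' you flag are not technicalities but actual failures. First, the reindexing issue is an off-by-one that breaks coverage: with the strict test $i > p$ applied to positions in the \emph{current} (already reduced) face and the initial call BCON$(\alpha,0,D)$, the $i = 0$ face of a maximal simplex is never recursed into. Already for a single $2$-simplex $\{v_0,v_1,v_2\}$ the column of the edge $\{v_1,v_2\}$ is never written (and for a single $3$-simplex only one of the six edge columns ever gets written), so the output matrix is wrong. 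The repair is exactly the one you hint at: the test must be non-strict in current positions ($i \geq p$), equivalently strictly increasing in absolute positions.

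Second, the identification issue breaks non-repetition: the guard $\alpha \neq Lor(\alpha)$ does not stop a representative that is a face of two different maximal simplices from being processed once per root. Take $S = \{(2,2)\}$ and $R = \{\{(0,1),(3,4)\}\}$, two triangles glued along an edge: BCON reaches the representative edge $\{0,1\}$ from both triangles, and since the updates are incremental, its column ends up with entries $\pm 2$. (In the paper's torus example this is invisible only because the doubly written columns happen to be zero --- every edge there is a loop at the single vertex.) So both correctness and the $\bigO(|T|)$ bound require a mechanism the algorithm lacks, e.g.\ marking a representative as processed on first visit and extending the guard to return on it; with that fix your summation over representatives goes through. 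For comparison, the paper's own proof does not resolve any of this: it asserts that representatives ``pass into the loop'' and that the recursion ``guarantees'' all sub-simplices are updated, never addresses multiplicity or coverage, and never accounts for the claimed running time at all. Your plan is more rigorous than the paper's proof precisely where it matters, but to finish it you must first repair the algorithm, not just the argument.
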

	
\begin{proof}
		We claim that BCON$(\alpha,0,D)$ iterated over all simplices $\alpha\in\bar{S}$ is such an algorithm. Observe that BCON, in its first step terminates if the simplex $\alpha$ has already been related away by an explicit relation. Hence, we know that any representative simplex in $\bar{S}$ will continue into the loop. Because we are using $Lor(\alpha^*)$ for all faces of $\alpha$ we know each face gets mapped properly to the representative of its equivalence class. Lastly, the recursive call using the $Lor(\alpha^*)$ guarantees the update all the sub-simplices of $\alpha$ using only the representatives of each equivalence class. 
		Observe that the only way for any simplex to be related away entirely is by an explicit relation or a consequence of an explicit relation of higher order simplex. As we previously showed all simplices not related away explicitly will pass into the loop. Hence we know that BCON not only ignores all non-representative simplices, but also correctly computes boundaries for all representatives.	
\end{proof}
It is worth noting that part of what makes $BCON$ efficient is that it does in fact improve upon the naive approach to boundary generation over a complex, although it may not be immediately obvious. The parameter $p$ that $BCON$ makes use of recursively is actually preventing the generation of repeat faces in $\alpha$. If one wants to pre-generate the simplices in $C$, as they do in \cite{Dumas} for example, this algorithm is straightforward to modify to do so: remove the matrix operations and recursively keep a list of all faces generated from $\alpha$.\newline

We have assumed the existence of a function $pos$ that gives us the relative order of a simplex among all other simplices in $C$ of the same dimension. In practice we can compute this by explicitly generating all sub-complexes of $C$, ordering them lexicographically, and using the position of each simplex to calculate its order. This method suggests an amortized algorithm which makes use of a lookup table storing the location of each simplex. However, we can also calculate the order of a simplex $\alpha$ by solely using $G$ which does not require the generation of the entire complex which up until now we have tried to avoid. For completeness we now illustrate one method of calculating $pos(\alpha)$, with the understanding that there may be better ways to implement this. Notationally we assume $\alpha$ is the set of vertices $\{\alpha_0,\alpha_1,\dots,\alpha_n\}$ with $d_\alpha = |\alpha| - 1$ and $G = (S,R)$

\newpage

\begin{algorithm}
	$Pos(\alpha)$:\newline
	\textbf{if $\alpha_0 \leq j(S_{d_\alpha},0)$}:\newline
	\pyspace\pyspace return $\frac{\alpha_0 - j(S_{d_\alpha - 1},0)}{d_\alpha}$ //$\alpha$ is a maximal simplex, so it is before all non-maximal $d_\alpha$ simps\newline
	\textbf{else}\newline
	find min $x$ where $j(S_x,0)\geq \alpha_0$ \newline
	//finds dimension of maximal simplex $\beta\supset\alpha$ that contains $\alpha$\newline
	$\delta  = \lfloor \frac{\alpha_0 - j(S_{d_\alpha - 1},0)}{x}\rfloor$ //find pos of $\beta$ among x-simps containing $\alpha$\newline
	$a = \delta\binom{x}{d_\alpha} + j(S_x,d_\alpha)$ //number of $d_\alpha$-simps before $\beta$ in $C$\newline
	$b = \sum_{i\leq d_\alpha}\sum_{k}^{\alpha_i - \alpha_{i-1}} \binom{\alpha_i - \alpha_{i-1}}{k}\cdot\binom{|\alpha_i - (\delta+1)\cdot x\cdot j(S_{x-1},0)|}{d_\alpha - k}$\newline
	//number of $d_\alpha$ simps in $\beta$ before $\alpha$\newline
	return $a+b$
\end{algorithm} 

\subsection{Simple examples}

We have already illustrated some of the simplicity that generating sets provide in describing complexes in the example of the torus above. From experimentation with constructing different complexes it is reasonable to conjecture the existence of certain patterns that may simplify the generation of the relational data used in the generating set for certain topological features. Looking back at the $\Delta$-complex for the torus we can see that the relations among boundaries are matched together in reverse lexicographical order (this is not a pattern that necessarily holds in higher dimensional tori). We have yet to look at patterns that seem promising for constructing more general features. Here we conclude with a much simpler example, which at one time JPlex used as an exercise in its tutorial; that being the example of constructing a 7-dimensional hole. In our test code we make use of a function $getBoundary$ that maps a simplex (a list of natural numbers) to its (ordered) set of simplices that make up its boundary. We also make use of $coupleSimps$ which takes two lists of simplices $A,B$ and returns a new list $L = \{(a_i,b_i)|a_i\in A,b_i\in B\}$ which in this instance is our relation set.\newline

\includegraphics[scale = .7]{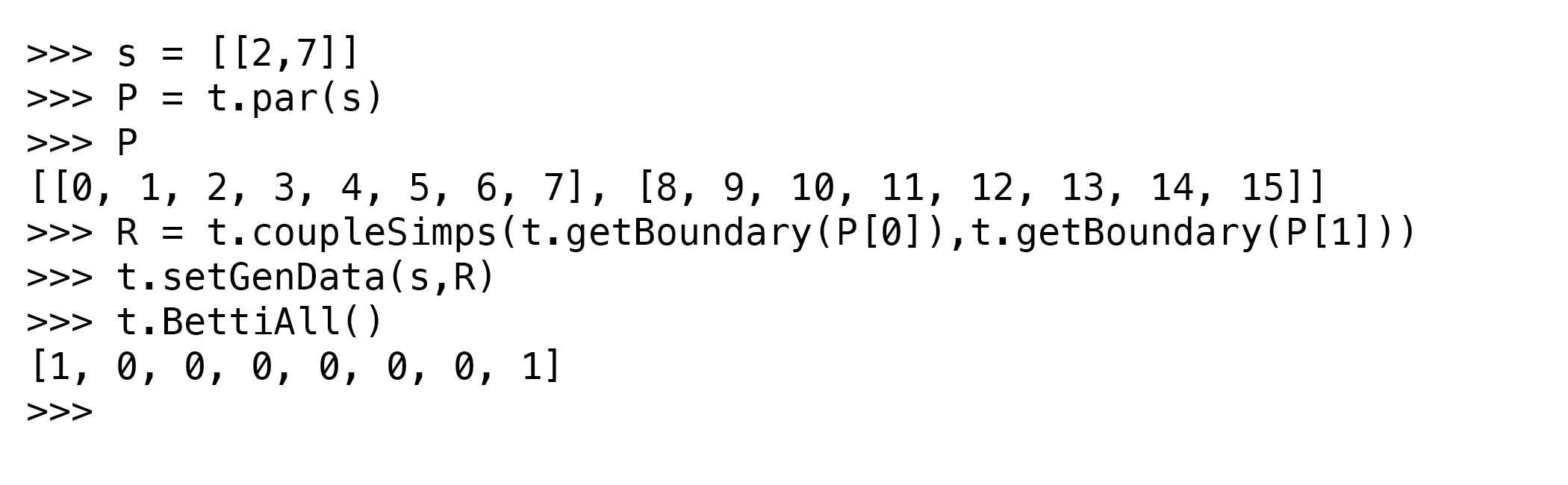} 

This is not the only way to construct this specific example, as we could also use the generating set $S = \{(1,8)\}$ with no relations, and remove the highest order boundary matrix from our list of boundary maps. However, the above example is a good illustration of how easily we can describe a complex through its generating set.  
\section{Converting to Generating Sets}

Consider the representation of a complex $C$ encoded in matrix $M$ in the following way. Similar to the boundary matrix, each column in $M$ represents a simplex in $C$ except all maximal simplices of dimension at least 1 are given a column (as opposed to only simplices of some fixed dimension). The rows of $M$ are given to the vertices of $C$, and each column in turn represents the complex formed by the power set of all vertices with non-zero rows in $M$. We may refer to $M$ as the \textbf{incidence matrix} of complex $C$. This representation of $C$ is equivalent in information to the generating sets we have discussed thus far. 

Note: this version of the incidence matrix is not the same as what is defined in \cite{Awasthi} and possibly elsewhere. The term incidence matrix is sometimes also used in this context to refer to a matrix that encodes the relations among simplices, similar to one of the naive approaches we discussed in section 1 for representing relations. In other contexts it is sometimes used as being nearly equivalent to what we have defined to be the boundary matrix, which is how it is used in \cite{Awasthi}. We will give an algorithm for converting from $M$ to a generating set, with the observation that going from generating sets to a matrix $M$ as described is much more straightforward. \newline

\begin{thm}
	Let $G = (S,R)$ be the generating data for the complex $C$ and let matrix $M_{n\times m}$ be its incidence matrix. Then $S$ can be constructed from $M$ in time $\bigO(mn)$.
\end{thm}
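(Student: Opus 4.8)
The plan is to read $S$ directly off of $M$ with a single linear scan of its $mn$ entries, using the fact that a maximal $d$-simplex has exactly $d+1$ vertices. Under the stated encoding, the column of $M$ representing such a simplex has exactly $d+1$ non-zero entries, so the dimension attached to a column is one less than its number of non-zero rows. The whole construction then amounts to counting non-zeros per column, tallying the resulting dimensions, and separately recovering the maximal $0$-simplices.

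First I would iterate over the $m$ columns and, for each column, count its non-zero entries; subtracting one yields the dimension $d$ of the corresponding maximal simplex. This costs $\bigO(n)$ per column and $\bigO(mn)$ in total. I would accumulate these in an array (or dictionary) $T$ indexed by dimension so that, after the pass, $T[d]$ records the number $n_d$ of columns representing a maximal simplex of dimension $d$. Each pair with $n_d>0$ and $d\geq 1$ then becomes a tuple $(n_d,d)$ of $S$.

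Second, I must account for maximal $0$-simplices (isolated vertices), which by the stated convention receive no column in $M$. A vertex is a maximal $0$-simplex precisely when it lies in no maximal simplex of dimension $\geq 1$, i.e. when its row of $M$ is identically zero. I would therefore detect the all-zero rows, either in a separate row scan or by folding the test into the same pass and marking which rows are ever hit; their count $n_0$, if positive, contributes the tuple $(n_0,0)$. This detection again costs at most $\bigO(mn)$. Finally I would assemble the nonzero tuples and order them by dimension to meet the requirement $d_i<d_j$ for $i<j$; since there are at most $n$ distinct dimensions, this sort is dominated by the scanning cost, so the three stages together give the claimed $\bigO(mn)$ bound.

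The step requiring the most care is the bookkeeping for dimension $0$: because isolated vertices are deliberately omitted from the columns of $M$, they must be recovered from the rows rather than the columns, and one must take care not to conflate a vertex of low incidence with one that is genuinely maximal. Everything else is a routine tally, so the correctness argument reduces to verifying that the number of non-zero entries in a column equals the number of vertices of the corresponding simplex, namely $d+1$, and that the all-zero rows are exactly the maximal $0$-simplices.
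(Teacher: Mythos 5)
Your proposal follows essentially the same approach as the paper's proof: scan each of the $m$ columns, count its non-zero entries to read off the dimension of the corresponding maximal simplex, and tally these counts by dimension, for a total cost of $\bigO(mn)$. You actually go a bit further than the paper, whose proof only tallies columns and is silent about maximal $0$-simplices; your additional detection of all-zero rows (isolated vertices, which by the stated encoding receive no column) fills in that omitted case without changing the complexity.
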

\begin{proof}
	The algorithm is straightforward. For each column of $M$ we count the number of non-zero entries. If this count gives us the value $k$ we then know we have a maximal $(k-1)$-simplex. By keeping a running count of each $(k-1)$-simplex, we get the total number of $k$-simplices for all $1\leq k \leq n$.  
\end{proof}

To encode relational data in a set $R$ is also straightforward in theory. Let a $k$-simplex $\alpha$ and an $\ell$-simplex $\beta$ be in a complex $C$. Suppose that $\alpha,\beta$ share the same $x+1\leq k,\ell$ non-zero rows in their representation in incidence matrix $M$. We then form a relation between $\alpha_{x}$, the $x$-dimensional face of $\alpha$, and $\beta_{x}$, the $x$-dimensional face of $\beta$.

We now present an algorithm for obtaining the relational data of incidence matrix $M$ given that we have already calculated $S$, the simplicial data for complex $C$, from $M$. To do this we assume for the moment access to two functions. The first is  $O:C_k\times \{0\dots \text{dim}(\alpha)\}\rightarrow \{1\dots V\}$ where $V = j(S,0)$ for which $O(\alpha,i)$ is equal to the label in $C$ of the $i$-th vertex in $\alpha$. The second is $O^*:C_k\times C_0/R \rightarrow \{0\dots \text{dim}(\alpha)\} $ where $O^*(\alpha,v)$ is equal to the position of vertex $v$ in $\alpha$ with relations added (we can obtain this from $M$ directly).

\begin{algorithm}
	$Rel(M_{n\times m}):$\newline
	Let $M_c$ be the set of columns in $M$\newline
	Let $R = \emptyset$\newline
	\ForEach{$s,r\in M_c$}{
		Let $I = \{ (O^*(s,v),O^*(r,v))|v\in s\cap r\cap C_0 \}$\newline
		$\alpha_1 = \{O(s,x[0]) |x\in I \}$\newline
		$\alpha_2 = \{O(r,x[1]) |x\in I \}$\newline
		add $(\alpha_1,\alpha_2)$ to $R$
		}
	return $R$
\end{algorithm}

We now provide definitions for the two functions $O$ and $O^*$. They are the following:

\begin{algorithm}
	$O(\alpha,i):$\newline
	$d = dim(\alpha)$\newline
	$\lambda = loc(\alpha)$\newline
	$\bar{S} = S_d \cup \{(\lambda,d)\}$\newline
	return $j(\bar{S},0) + i$
\end{algorithm}

\begin{algorithm}
	$O^*(\alpha,v):$\newline
	$d = dim(\alpha)$\newline
	$\lambda = loc(\alpha)$\newline
	$\bar{S} = S_d \cup \{(\lambda,d)\}$\newline
	$t = j(\bar{S},0), row = 0,c = 0$\newline
	\While{$t + c\neq v$}{
		if $ M_\alpha[row] == 1: c = c + 1$\newline
		$row = row + 1$\newline
		}
		return $c$
	
\end{algorithm}\newpage

Here $\lambda = loc(\alpha)$ is a function where $\alpha$ is the $\lambda$-th simplice of dimension $|\alpha|-1$ in the columns of the incidence matrix $M_c$.\newline

\section{Conclusion}
We have introduced the notion of Generating Sets and have shown how they can be used to construct the boundary matrix of both a simplicial complex as well as its representation as a $\Delta$-complex. We have calculated the complexity of such a construction, and have shown that this is at least an improvement on the naive algorithm of building and gluing inductively. In our approach we have illustrated how given a set of relations we can save work by not repeatedly generating a simplex as both a representative and non-representative of its equivalence class. Finally we conclude by showing that the incidence matrix can be thought of as equivalent to a generating set, and present algorithms to convert from incidence matrix to generating set.


\begin{thebibliography}{9}
	\bibitem{Dumas}J-G. Dumas et. al., \emph{Computing Simplicial Homology Based on Efficient Smith Normal Form Algorithms}, Algebra Geometry and Software Systems, Springer, 2003, 177-206
	\bibitem{Awasthi}V. Awasthi, \emph {A Note on the Computation of Incidence Matrices of Simplicial Complexes}, International Journal of Pure and Applied Mathematics, Vol 8, No 19, 2013, 935-939
	\bibitem{Jager}G. J\"{a}ger, \emph{A New Algorithm for Computing the Smith Normal Form and its Implementation on Parellel Machines}, Mathematisches Seminar, Christian-Albrechts-Universit\"{a}t 
	\bibitem{hatcher} A. Hatcher, \emph{Algebraic Topology}, Cambridge University Press, 2002
	\bibitem{Carlsson} G. Carlsson, \emph{Topology and Data}, Bulletin of the American Mathematical Society, Vol 46, No 2, April 2009, 255-308
	\bibitem{TDASourceCode} \url{https://github.com/fearlesspandas/tda/blob/master/tda2.py}
	\bibitem{Ghrist} R. Ghrist, \emph{The Persistent Topology of Data}, Bulletin of the American Mathematical Society, Vol 45, No 1, January 2009, 61-75 
\end{thebibliography}
\end{document}